\documentclass{amsart}
\usepackage{amsfonts,amssymb,amscd,amsmath,enumerate,verbatim,calc}

\newcommand{\CM}{Cohen-Macaulay}

\newcommand{\wrt}{with respect to}

\newcommand{\n}{\mathfrak{n} }
\newcommand{\m}{\mathfrak{m} }
\newcommand{\M}{\mathfrak{M} }

\newcommand{\A}{\mathfrak{a} }
\newcommand{\R}{\mathcal{R} }

\newcommand{\C}{\mathcal{C}}
\newcommand{\rt}{\rightarrow}

\newcommand{\ov}{\overline}

\newcommand{\Hom}{\operatorname{Hom}}

\theoremstyle{plain}

\newtheorem{theorem}{Theorem}[section]

\newtheorem{lemma}[theorem]{Lemma}

\theoremstyle{definition}
\newtheorem{definition}[theorem]{Definition}

\newtheorem{remark}[theorem]{Remark}

\theoremstyle{remark}

\begin{document}

\title{On $p_g$-ideals in positive characteristic}
\author{Tony~J.~Puthenpurakal}
\date{\today}
\address{Department of Mathematics, IIT Bombay, Powai, Mumbai 400 076}

\email{tputhen@math.iitb.ac.in}
\subjclass{Primary 13A30, 13B22; Secondary 13A50, 14B05}
\keywords{$p_g$-ideal, normal Rees rings, Cohen-Macaulay rings, stable ideals}

 \begin{abstract}
Let $(A,\m)$ be an excellent normal domain of dimension two containing a  field $k \cong A/\m$. An $\m$-primary ideal $I$ to be  a $p_g$-ideal if the Rees algebra $A[It]$ is a \CM \ normal domain.
If $k$ is algebraically closed then Okuma, Watanabe and  Yoshida proved that $A$ has $p_g$-ideals and furthermore product of two
$p_g$-ideals is a $p_g$ ideal. In a previous paper we showed that if $k$ has characteristic zero then $A$
has $p_g$-ideals. In this paper we prove that if $k$ is perfect field of positive characteristic then also $A$ has $p_g$ ideals.
\end{abstract}
 \maketitle
\section{introduction}
Dear Reader, while reading this paper it is a good idea to have \cite{P} nearby.
Let $(A,\m)$ be a normal domain of dimension two.
By Zariski's theory of integrally closed ideals in a two dimensional regular local ring, we get that if $A$ is regular and $I$ is an integrally closed $\m$-primary ideal then the Rees algebra $\R(I) = A[It]$ is a \CM \ normal domain;
see \cite[Chapter 14]{HS} for a modern exposition.
Later
Lipman proved that if $(A,\m)$ is a two dimensional rational singularity then  analogous results holds, see \cite{Lipman}.

Assume $(A,\m)$ is an excellent normal domain of dimension two  containing an algebraically closed field $k \cong A/\m$.
For such rings Okuma, Watanabe and  Yoshida in \cite{OWY-1} introduced (using geometric techniques) the notion of $p_g$-ideals as follows:
let $I$ be an $\m$-primary ideal in $A$.
The $I$ has a resolution $f \colon X \rt Spec(A)$ with $I\mathcal{O}_X $ invertible. Then $I\mathcal{O}_X = \mathcal{O}_X(-Z)$ for some anti-nef
cycle $Z$.   It can be shown that $\ell_A(H^1(X, \mathcal{O}_X(-Z)) \leq p_g(A)$ where $p_g(A) = \ell_A(H^1(X,\mathcal{O}_X)$ is the geometric genus of $A$ and $Z$ is an anti-nef cycle such that $\mathcal{O}_X(-Z)$ has no fixed component.
 An integrally closed $\m$-primary ideal $I$ with $\ell_A(H^1(X, \mathcal{O}_X(-Z)) = p_g(A)$  is called a $p_g$-ideal. They showed that $p_g$ ideals exist in $A$. Furthermore
if $I, J$ are two $\m$-primary $p_g$ ideals then they also proved that $IJ$ is a $p_g$-ideal. Furthermore $I$ is stable and so  the Rees algebra $\R(I)$ is a \CM \ normal domain.
They also proved that if $A$ is also a rational singularity then any $\m$-primary integrally closed ideal
is a $p_g$-ideal. In a later paper \cite{OWY-2} they showed that if $\R(I)$ is a \CM \ normal domain then $I$ is a $p_g$-ideal.

Motivated by this result  we made the following definition in \cite{P}:
\begin{definition}
 Let $(A, \m)$ be a normal domain of dimension two. An $\m$-primary ideal $I$ is said to be $p_g$-ideal in $A$ if the Rees algebra
 $\R(I) = A[It]$ is a normal \CM \  domain.
\end{definition}
We note that if $I$ is a $p_g$-ideal then all powers of $I$ are integrally closed. Furthermore if the residue field of $A$ is infinite then $I$ is stable (i.e., reduction number of $I$ is $\leq 1$),
see \cite[Theorem 1]{GS}.  From the definition
it does not follow that if $I, J$ are $p_g$-ideals then the product $IJ$ is also a $p_g$ ideal.
However if $A$ is also analytically unramified with an infinite residue field then by a result of Rees;  product of two
$p_g$ ideals is $p_g$, see \cite[ 2.6]{R} (also see \cite[1.2]{P}).
\emph{We do not know that whether every normal domain of dimension two has a
$p_g$ ideal.}

 In \cite{P} we proved that if  $(A,\m)$ is  an excellent two dimensional normal domain containing a field $k \cong A/\m$ of characteristic zero,
 then there exists $p_g$ ideals in $A$. The technique in \cite{P} fails if $k$ has positive characteristic.
In this paper we prove
\begin{theorem}
 \label{existence-p}
 Let $(A,\m)$ be an excellent two dimensional normal domain containing a perfect field $k \cong A/\m$ of characteristic $p > 0$.
 Then there exists $p_g$ ideals in $A$.
\end{theorem}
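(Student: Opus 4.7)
My plan is to reduce, via faithfully flat base change, to the case of an algebraically closed residue field, where the existence theorem of Okuma-Watanabe-Yoshida applies. This parallels the strategy of \cite{P} in characteristic zero; perfectness of $k$ enters in exactly one place, to preserve normality of $A$ under residue-field extension, and I expect this to be the main obstacle.

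\textbf{Step 1: reduction to the complete case.} Since $A$ is excellent, $\wh A$ is a normal local ring (hence a normal domain, as a local normal ring is automatically a domain) and an excellent two-dimensional domain. The map $A \to \wh A$ is faithfully flat with $\R(I) \otimes_A \wh A = \R(I\wh A)$, so Cohen-Macaulayness and normality of Rees algebras descend from $\wh A$ to $A$. One may therefore assume $A$ is complete; by Cohen's structure theorem, fix a coefficient field $k \subset A$.

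\textbf{Step 2: extension of scalars to $L := \ov{k}$.} Since $k$ is perfect, $L/k$ is separable. Using the standard construction of a faithfully flat complete local extension with prescribed residue field (Bourbaki, \emph{Alg\`ebre Commutative}, Ch.~IX), concretely realised as $B := \varprojlim_n (A/\m^n \otimes_k L)$, one obtains a faithfully flat local extension $A \to B$ of complete local Noetherian rings such that $B/\m B = L$ and $\m B$ is the maximal ideal of $B$. Because $A$ is a complete normal local domain, $k$ is algebraically closed in its fraction field $Q(A)$ (an element of $Q(A)$ algebraic over $k$ is integral over $A$, hence lies in $A$, hence reduces into $k = A/\m$), and so $A$ is geometrically integral over $k$; combined with normality of $A$ and separability of $L/k$, this ensures that $B$ is again a two-dimensional normal local domain.

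\textbf{Step 3: apply OWY and descend via a common resolution.} By OWY, $B$ has $p_g$-ideals. To obtain one extended from $A$, take a resolution of singularities $f \colon X \to \Spec A$, and form the base-changed morphism $\tilde f \colon \tilde X := X \times_A B \to \Spec B$, which is again a resolution (the fibres of $A \to B$ are geometrically regular by separability of $L/k$). Any anti-nef cycle $Z$ on $X$ pulls back to an anti-nef cycle $\tilde Z$ on $\tilde X$, and flat base change for the proper map $f$ gives
\[ H^i(\tilde X, \mathcal{O}_{\tilde X}(-\tilde Z)) \;\cong\; H^i(X, \mathcal{O}_X(-Z)) \otimes_A B \qquad (i \geq 0), \]
so the two lengths agree (since $\m B$ is the maximal ideal of $B$), and in particular $p_g(A) = p_g(B)$. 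Choosing after OWY a sufficiently positive anti-nef cycle $Z$ on $X$ (for instance a large multiple of the fundamental cycle of $f$, or of $\sum E_i$), the pullback $\tilde Z$ realises a $p_g$-ideal of $B$; the ideal $I$ of $A$ defined by $I\mathcal{O}_X = \mathcal{O}_X(-Z)$ then satisfies $(IB)\mathcal{O}_{\tilde X} = \mathcal{O}_{\tilde X}(-\tilde Z)$, so $\R(I) \otimes_A B = \R(IB)$ is a \CM \ normal domain by OWY. Faithful flatness of $A \to B$ then forces $\R(I)$ itself to be a \CM \ normal domain, making $I$ a $p_g$-ideal of $A$.
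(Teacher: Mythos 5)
Your overall strategy --- base-change to algebraically closed residue field, apply Okuma--Watanabe--Yoshida, descend by faithful flatness --- is the right frame, and Steps 1 and 2 are essentially fine (for $\m$-primary ideals the passage to $\wh{A}$ is harmless since every $\wh{\m}$-primary ideal is extended from $A$, and normality of $B$ follows from excellence plus separability of $\ov{k}/k$; note the paper avoids completion altogether by observing that $A\otimes_k\ov{k}$ is already Noetherian local because $\ov{k}/k$ is algebraic). The real problem is Step 3, which is where the entire difficulty of the theorem lives: a $p_g$-ideal of $B$ produced by OWY has no reason to be extended from $A$, and your mechanism for forcing it to be extended --- ``choosing after OWY a sufficiently positive anti-nef cycle $Z$ on $X$ ... the pullback $\tilde Z$ realises a $p_g$-ideal of $B$'' --- is asserted, not proved. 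What you cite from OWY is bare existence of a $p_g$-ideal in $B$; what you actually need is the much stronger statement that \emph{every} sufficiently large anti-nef cycle on $\tilde X$ yields a $p_g$-ideal, together with the facts that such cycles can be taken in the image of the pullback lattice from $X$, that pullback preserves anti-nefness and the ``no fixed component'' condition when the exceptional components of $X$ split into several geometric components over $\ov{k}$, and that $p_g$ and the relevant $h^1$'s are computed compatibly on both sides. (Also, the parenthetical suggestion of a large multiple of $\sum E_i$ is wrong: $n\sum E_i$ is anti-nef only if $\sum E_i$ is, which generally fails.) None of this is routine, and it is precisely the step your proof leaves blank.

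For contrast, the paper descends differently: it writes $A\otimes_k\ov{k}=\bigcup_E A^E$ over finite extensions, pulls the OWY ideal back to a finite Galois level $A^F$, replaces it by the $G$-invariant product $\prod_{\sigma\in G}\sigma(W)$ of its conjugates (still $p_g$ since products of $p_g$-ideals are $p_g$), and then must show that the invariant ring $\C=\R(K)^G$ has a Cohen--Macaulay Veronese $A[Jt]$. In characteristic $p$ one cannot average over $G$ (no Reynolds operator when $p$ divides $|G|$), and the whole point of the paper is to replace that averaging by the Ellingsrud--Skjelbred spectral sequences comparing $H^*_\M((-)^G)$ with $(H^*_\M(-))^G$, which is how $H^i_\M(\C)=0$ for $i\le 2$ in positive degrees is extracted from the Cohen--Macaulayness of $\R(K)$. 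Your route, if completed, would bypass this machinery entirely, but completing it requires reproving a quantitative version of OWY's existence theorem in a form compatible with the residue field extension --- a substantial geometric argument that your proposal does not supply.
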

To prove Theorem \ref{existence-p} we build on the techniques developed in \cite{P}. The main new technique are two spectral sequences discovered by Ellingsrud-Skjelbred; \cite[section 2]{ES}.

We now describe in brief the contents of this paper. In section two we discuss some preliminaries that we need.
In section three we discuss the two spectral sequences discovered by Ellingsrud-Skjelbred and give an application that we need. In section four we give a proof of Theorem \ref{existence-p}.

\section{preliminaries}
Throughout this section $(A,\m)$ is a Noetherian local ring of dimension two containing a perfect field $k \cong A/\m$.

Most of the following was poved in \cite{P}
\begin{lemma}\label{basic}
Let $(A,\m)$ be a Noetherian local ring containing a perfect field $k \cong A/\m$. Let $\ell$ be a finite extension of $k$. Set $B = A \otimes_k \ell$. Then we have the following
\begin{enumerate}[\rm (1)]
 \item $B$ is a finite flat $A$-module.
 \item $B$ is a Noetherian ring.
 \item $B$ is local with  maximal ideal $\m B$ and residue field isomorphic to $\ell$.
 \item $B$ contains $\ell$.
 \item $A$ is \CM \ (Gorenstein, regular) \ if and only if $B$ is \CM \ (Gorenstein, regular).
 \item  If $A$ is excellent then so is $B$.
 \item If $A$ is normal then so is $B$.
 \item If $A$ is excellent normal and $I$ is an integrally closed ideal in $A$ then $IB$ is an integrally closed ideal in $B$.
 \item If $\ell$ is a Galois extension of $k$ with Galois group $G$ then $G$ acts on $B$ (via $\sigma(a\otimes t) = a\otimes \sigma(t)$). Furthermore  $B^G = A$.
 \item
 If $A$ is \CM \ of dimension two, the natural map $H^2_\m(A) \rt H^2_\m (B)$ is an inclusion.
\end{enumerate}
\end{lemma}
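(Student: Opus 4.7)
The plan is to exploit the perfectness of $k$: a finite extension $\ell/k$ of a perfect field is separable, so $k \hookrightarrow \ell$ is étale and the base change $A \to B = A \otimes_k \ell$ is finite étale. Most of the ten assertions then reduce to standard ascent/descent statements along finite flat (or étale) local homomorphisms, and I would handle them in a few natural groups.

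First, for (1)--(4), I would argue directly. Choosing a $k$-basis of $\ell$ exhibits $B$ as a free $A$-module of rank $[\ell:k]$, giving (1), and (2) follows since $B$ is module-finite over Noetherian $A$. For (3), reducing modulo $\mathfrak{m}$ gives $B/\mathfrak{m}B = k \otimes_k \ell = \ell$, a field, and since $B$ is finite over the local ring $A$, every maximal ideal of $B$ lies over $\mathfrak{m}$, forcing $\mathfrak{m}B$ to be the unique maximal ideal; (4) is tautological. For (5), the map $A \to B$ is finite flat local with field fiber, so depth (hence the \CM \ property) transfers in both directions; one computes $\mathfrak{m}B/\mathfrak{m}^2 B = (\mathfrak{m}/\mathfrak{m}^2)\otimes_k \ell$, which has the same $\ell$-dimension as the $k$-dimension of $\mathfrak{m}/\mathfrak{m}^2$, so $\mu(\mathfrak{m}B) = \mu(\mathfrak{m})$ and regularity transfers; Gorensteinness transfers by comparing canonical modules. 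Part (6) follows from the standard fact that excellence passes to finite algebras over excellent rings.

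The crux is (7) and (8). Since $k$ is perfect, $k \to \ell$ is étale, hence so is $A \to B$. Because étale base change preserves normality, (7) follows. For (8), the cleanest route is via Rees algebras: $\overline{\R(I)} = \bigoplus_n \overline{I^n}t^n$ is a normal domain inside $A[t]$, and its étale base change $\overline{\R(I)} \otimes_A B = \bigoplus_n (\overline{I^n}B)t^n$ is again a normal domain inside $B[t]$, integral over $\R(IB) = \R(I)\otimes_A B$ and having the same fraction field $B(t)$. Therefore it equals the integral closure $\overline{\R(IB)} = \bigoplus_n \overline{I^n B}\,t^n$. Comparing degree-$n$ pieces gives $\overline{I^n B} = \overline{I^n}B$; setting $n=1$ and using $\overline{I} = I$ gives (8).

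Parts (9) and (10) are formal. In (9), the action $\sigma(a\otimes t) = a \otimes \sigma(t)$ is visibly well-defined and $A$-linear; since $A$ is $k$-flat and the functor of $G$-invariants is a finite equalizer, it commutes with $A\otimes_k -$, so $B^G = A \otimes_k \ell^G = A$. In (10), freeness of $B$ over $A$ gives $H^2_{\mathfrak{m}}(B) = H^2_{\mathfrak{m}}(A) \otimes_A B = H^2_{\mathfrak{m}}(A) \otimes_k \ell$, and the natural map $H^2_{\mathfrak{m}}(A) \to H^2_{\mathfrak{m}}(B)$ becomes $x \mapsto x\otimes 1$, which is injective by faithful flatness of $k \hookrightarrow \ell$. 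The main obstacle is (8): one must know that the integral closure of the Rees algebra of $IB$ equals the base change of $\overline{\R(I)}$, and this in turn rests on normality surviving étale base change---exactly the place where the perfectness hypothesis is indispensable.
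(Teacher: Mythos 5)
Your proposal is correct, and for the two parts the paper actually proves (the rest are quoted from \cite[2.1]{P}) it takes genuinely different, and arguably cleaner, routes. For (9) the paper invokes the normal basis theorem to produce the $A$-basis $\{1\otimes\sigma(x)\}_{\sigma\in G}$ of $B$ and then compares coefficients of an invariant element; you instead observe that $(-)^G$ is the kernel of the finite map $M\to\prod_{\sigma}M$, $m\mapsto(\sigma m-m)_\sigma$, which commutes with the (automatically flat) functor $A\otimes_k-$, so $B^G=A\otimes_k\ell^G=A$. This avoids the normal basis theorem entirely and is a strictly more functorial argument. For (10) the paper tensors $0\to k\to\ell\to V\to 0$ with $A$ and uses $H^1_\m(A\otimes_k V)=0$, i.e.\ the hypothesis $\depth A=2$, to kill the connecting map; you instead use flat base change $H^2_\m(B)\cong H^2_\m(A)\otimes_k\ell$ together with the $k$-splitting of $k\hookrightarrow\ell$, which gives injectivity of $H^i_\m(A)\to H^i_\m(B)$ in \emph{every} degree and does not use the \CM \ hypothesis at all --- a mild strengthening of the stated claim. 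Your arguments for (1)--(7) are the standard ones. For (8) your Rees-algebra argument (identify $\overline{\R(I)}\otimes_A B$ as a normal domain integral over $\R(IB)$ with the same fraction field, hence equal to $\overline{\R(IB)}=\bigoplus_n\overline{I^nB}t^n$, and compare graded pieces) is sound, yields the stronger conclusion $\overline{I^nB}=\overline{I^n}B$ for all $n$, and in fact does not appear to need excellence; the one point worth spelling out is that normality of $\overline{\R(I)}\otimes_A B$ comes from \'etaleness of $k\to\ell$ (this is where perfectness of $k$ enters) and that this base change is a domain because it embeds in $B[t]$ by flatness.
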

\begin{proof}
For (1)-(8) see \cite[2.1]{P}.

 (9) It is clear that $G$ acts on $B$ (via the action described) and $A \subseteq B^G$. By normal basis theorem, cf., \cite[Chapter 6, Theorem 13.1]{Lang},  there exists $x \in \ell$ such that $\{ \sigma(x) \colon \sigma \in G \}$ is a basis of $\ell$ over $k$.
 A basis of $B$ as an $A$-module is $\{ 1 \otimes \sigma(x) \colon \sigma \in G \}$.

 Let $\xi \in B^G$. Let $\xi = \sum_\sigma a_\sigma(1 \otimes \sigma(x))$. Let $e$ be the identity in $G$. Let $\tau \in G$.
 Then notice
 \[
 \xi = \tau^{-1}\xi =  \sum_\sigma a_\sigma(1 \otimes \tau^{-1}\sigma(x))
 \]
 Comparing terms we get $a_{e} = a_\tau$ for all $\tau \in G$. So
 \begin{align*}
 \xi &= a_{e}\left( \sum_\sigma 1 \otimes \sigma(x)  \right), \\
  &= a_{e}(1 \otimes \sum_\sigma \sigma(x) ), \\
  &= a_{e}(r \otimes 1) \quad  \text{where} \ r = \sum_\sigma \sigma(x) \in k, \\
  &= a_{e}r(1\otimes 1) \in A.
 \end{align*}
 The result follows.

 (10) We have an exact sequence of finite dimensional $k$ vector spaces
 \[
 0 \rt k \rt \ell \rt V \rt 0.
 \]
 So we have an exact sequence of $A$-modules
 \[
 0 \rt A \rt B \rt A\otimes_k V \rt 0.
 \]
 We note that both $B$ and $A\otimes_k V$ are free $A$-modules. As $H^i_\m(A) = 0$ for $i < 2$ the result follows.
\end{proof}

\s \text{A construction:}
 Fix an algebraic closure $\ov{k}$ of $k$. We investigate properties of
$A\otimes_k \ov{k}$.

\s \label{limit} Let
$$\C_k = \{ E \mid E \ \text{is a finite extension of $k$ in $\ov{k}$} \}.$$
We note that  $\C_k$ is a directed system of fields with $\lim_{E \in \C_k} E  = \ov{k}$.
For $E \in \C_k$ set $A^E = A\otimes_k E$. Then by \ref{basic} $A^E$ is a finite flat extension of $A$. Also $A^E$ is local with maximal ideal $\m^E = \m A^E$.
Clearly $\{ A^E \}_{E \in \C_k}$ forms a directed system of local rings and we have $\lim_{E \in \C_k} A^E  = A\otimes_k \ov{k}$.
By \cite[Chap. 0. (10.3.13)]{EGA3} it follows that $A\otimes_k \ov{k}$ is a Noetherian local ring (say with maximal ideal $\m^{\ov{k}})$.
Note that we may consider $A^E$ as a subring of $A\otimes_k \ov{k}$. We have
$$ A\otimes_k \ov{k} = \bigcup_{E\in \C_k}A^E \quad \text{and} \quad \m^{\ov{k}} = \bigcup_{E \in \C_k} \m^E.$$
It follows that $\m (A\otimes_k \ov{k}) = \m^{\ov{k}}$. It is also clear that $A\otimes_k \ov{k}$ contains $\ov{k}$ and its residue field
is isomorphic to $\ov{k}$. The extension $A \rt A\otimes_k \ov{k}$ is flat with fiber $\cong \ov{k}$. In particular $\dim A\otimes_k \ov{k}$ is two.

\s\label{cofinal} Let $F \in \C_k$. Set
\[
 \C_F = \{ E \mid E \in \C_k,  E\supseteq F \}.
\]
Then $\C_F$ is cofinal in $\C_k$. So we have $\lim_{E \in \C_F} A^E = A\otimes_k \ov{k}$.
Also note that if $E \in \C_F$ then
\[
 A^E = A\otimes_k E = A\otimes_k F \otimes_F E = A^F \otimes_F E.
\]
It also follows that $\m^E = \m^FA^E$.

For a proof of the following result see \cite[3.3]{P}.
\begin{lemma}
 \label{excellent}
 If $A$ is excellent then so is $A\otimes_k \ov{k}$
\end{lemma}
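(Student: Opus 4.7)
The plan is to verify the three substantive axioms of excellence---universal catenarity, the G-ring property (geometric regularity of formal fibers), and the J-2 property (openness of the regular locus in every finitely generated algebra)---for the Noetherian local ring $B := A \otimes_k \ov{k}$ of \ref{limit}. Noetherianness and $\dim B = 2$ have already been secured there via \cite{EGA3}. The structural input to exploit throughout is the expression $B = \varinjlim_{E \in \C_k} A^E$, together with the fact that each $A^E$ is a finite flat $A$-algebra and is therefore itself excellent by Lemma \ref{basic}(6).

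For universal catenarity and J-2 I would use a standard descent-to-the-finite-level argument: any finitely generated $B$-algebra $C$ is the base change to $B$ of a finitely generated $A^E$-algebra $C_0$ for some $E \in \C_k$, since the finitely many generators and relations defining $C$ involve only finitely many elements of $B = \bigcup_E A^E$. Faithful flatness of $B$ over $A^E$ then lets one compare chains of primes in $C$ with those in $C_0$ and lets one relate the regular locus of $C$ to that of $C_0$; excellence of $A^E$ delivers both properties for $C_0$, and these transfer to $C$.

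The main obstacle is the G-ring property. By the standard reduction to maximal ideals it suffices to verify that $\wh{B}$ is geometrically regular over $B$, where the completion is taken with respect to the unique maximal ideal $\m B$. The perfectness of $k$ is essential here: it makes $\ov{k}/k$ a separable algebraic extension, so tensoring with $\ov{k}$ should preserve the geometric regularity of the map $A \rt \wh{A}$ (which holds because $A$ is already a G-ring). The technical heart---and what I expect to be the hard part---is identifying $\wh{B}$, since $\m B$-adic completion does not commute with the filtered colimit $B = \varinjlim A^E$. I would try to realize $\wh{B}$ as the completion of $\wh{A} \otimes_k \ov{k}$ at the maximal ideal $\m \wh{A} \otimes_k \ov{k}$, using that $B/\m^n B = (A/\m^n) \otimes_k \ov{k}$ and that each $A/\m^n$ is finite-dimensional over $k$, and then combine this identification with a base-change principle for geometric regularity along separable field extensions to conclude.
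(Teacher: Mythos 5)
First, a point of reference: the paper does not prove this lemma itself---it simply cites \cite[3.3]{P}---so there is no in-text argument to compare against line by line. Your overall framework (reduce to the three defining conditions, exploit $A\otimes_k \ov{k}=\bigcup_E A^E$ with each $A^E$ finite flat and excellent, and identify $\wh{A\otimes_k\ov{k}}$ with the completion of $\wh{A}\otimes_k\ov{k}$) is reasonable, and you are right that the G-ring property is the crux. But as written the proposal has two genuine gaps. For universal catenarity, faithful flatness of $C_0\to C=C_0\otimes_{A^E}(A\otimes_k\ov{k})$ does \emph{not} by itself let you ``compare chains of primes'': to see that a saturated chain between $\q_1\subseteq\q_2$ in $C$ contracts to a saturated chain in $C_0$ (and conversely lifts), you need going-down for the induced maps on the quotients $C_0/(\q_1\cap C_0)\to C/\q_1$, and flatness is destroyed by passing to quotients. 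Catenarity does not in general transfer along faithfully flat (or even integral) extensions. What saves the situation here is the much stronger fact that $A^E\to A\otimes_k\ov{k}$ is \emph{integral with zero-dimensional geometrically reduced fibers}---a filtered colimit of the finite \emph{\'etale} maps $A^E\to A^F$, which is where perfectness of $k$ really enters---or alternatively Ratliff's criterion (universal catenarity via equidimensionality of $\wh{B/\p}$ for all primes $\p$). Neither is invoked in your sketch. (Incidentally, the separate J-2 verification is redundant: a semi-local Noetherian G-ring is automatically J-2, hence quasi-excellent, by a theorem of Grothendieck.)

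The more serious gap is that the G-ring step is not actually carried out. The factorization you propose, $B\to \wh{A}\otimes_A B=\wh{A}\otimes_k\ov{k}\to \wh{B}$, requires \emph{both} maps to be regular. The first is a base change of the regular map $A\to\wh{A}$ along an infinite separable algebraic extension, and here ``geometric regularity is preserved'' is not free: the base-changed fibers $\kappa(\q)\otimes_{\kappa(\p)}(\kappa(\p)\otimes_A\wh{A})$ must be shown to be Noetherian before geometric regularity even makes sense, which is the same kind of difficulty as the Noetherianness of $A\otimes_k\ov{k}$ itself. The second map is the completion map of $\wh{A}\otimes_k\ov{k}$, and for it to be regular you need precisely that $\wh{A}\otimes_k\ov{k}$ is a G-ring---i.e., the original problem for the complete ring $\wh{A}$. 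Your outline does not close this loop; one must at some point prove directly that $k[[x_1,\dots,x_n]]\otimes_k\ov{k}=\bigcup_E E[[x_1,\dots,x_n]]$ has geometrically regular formal fibers, or else bypass the whole computation by citing a permanence theorem for excellence under Noetherian ind-\'etale (equivalently, separable algebraic base field) extensions, in the spirit of Greco and Seydi---which is, in effect, what the citation to \cite[3.3]{P} delegates to. So the proposal correctly maps the terrain but does not yet contain a proof of its hardest claim.
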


The main properties of $A\otimes_k \ov{k}$ that we need is the following is summarised in the following result which is Theorem 3.4 in \cite{P}.
\begin{theorem}
 \label{eclair}
 (with hypotheses as above) Set $T = A\otimes_k {\ov{k}}$ and $\n = \m^{\ov{k}}$. We have
 \begin{enumerate}[\rm (1)]
  \item  $A$ is \CM \ (Gorenstein, regular) if and only if $T$ is \CM \ (Gorenstein, regular).
  \item If $A$ is a normal domain if and only if $T$ is a normal domain.
  \item Assume $A$ is  an excellent normal domain. Then we have
  \begin{enumerate}[\rm (a)]
   \item $I$ is integrally closed in $A$ if and only if $IT$ is integrally closed in $T$
   \item $I$ is a $p_g$ ideal in $A$ if and only if $IT$ is a $p_g$ ideal in $T$.
  \end{enumerate}
 \end{enumerate}
\end{theorem}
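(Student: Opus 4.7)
The plan is to reduce every statement to its analogue for the finite extensions $A \rt A^E$ handled in Lemma \ref{basic}, using the presentation $T = \lim_{E \in \C_k} A^E$ from \ref{limit}. The key structural fact is that $A \rt T$ is a local, faithfully flat map (being a filtered colimit of the faithfully flat maps $A \rt A^E$) whose closed fibre $T/\m T$ is the field $\ov{k}$, which is regular, Gorenstein, normal, etc.

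For (1) and (2), standard transfer results for a flat local homomorphism whose closed fibre is regular (respectively normal) yield that $T$ is \CM \ (Gorenstein, regular, normal) if and only if $A$ is. The argument can be organized along the tower $A \rt A^E \rt T$: Lemma \ref{basic}(5)-(7) handle the first arrow, and the second is a filtered colimit of faithfully flat local extensions, which preserves each property. To see that $T$ is a domain when $A$ is, each $A^E$ is local normal, hence a domain, and the transition maps $A^E \hookrightarrow A^{E'}$ are injective (since $E'$ is $E$-free), so the directed colimit $T = \bigcup_E A^E$ is a domain.

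For (3)(a), the direction $(\Leftarrow)$ follows from $IT \cap A = I$ by faithful flatness, together with the observation that integrality over $I$ in $A$ forces integrality over $IT$ in $T$. For $(\Rightarrow)$, given $x \in T$ integral over $IT$, a relation $x^n + a_1 x^{n-1} + \cdots + a_n = 0$ with $a_i \in I^i T$ involves only finitely many $\ov{k}$-coefficients, so one descends to a finite extension $E \in \C_k$ with $x \in A^E$ and $a_i \in I^i A^E$; Lemma \ref{basic}(8) then gives $x \in I A^E \sub IT$. For (3)(b), identify $\R(IT) = \R(I) \otimes_A T$ and apply the same flat-descent technology: $\R(I) \rt \R(IT)$ is faithfully flat, with fibres over the graded maximal ideals of $\R(I)$ being field extensions of the residue fields, and the domain property passes through the colimit as in (2). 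Hence $\R(I)$ is a \CM \ normal domain if and only if $\R(IT)$ is.

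The main obstacle is ensuring the flat-descent results apply cleanly to the non-module-finite map $A \rt T$. The remedy is always to descend to a suitable finite extension $A^E$, where Lemma \ref{basic} applies directly, and then pass to the directed colimit. A subtler verification arises in (3)(b), since the Rees algebra is not local: one checks \CM \ and normality at the graded maximal ideals of $\R(I)$, using that tensoring with $\ov{k}$ keeps the relevant fibres of the desired type, and uses the colimit $\R(IT) = \bigcup_E \R(I) \otimes_A A^E$ to reduce to the statements for the finite extensions.
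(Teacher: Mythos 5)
Your proposal is correct and follows essentially the route the paper intends: the paper does not reprove Theorem \ref{eclair} but quotes it as Theorem 3.4 of \cite{P}, whose argument is precisely this reduction to the finite levels $A^E$ via Lemma \ref{basic} and the directed colimit of \ref{limit}, together with flat local base change (closed fibre a field) for the Rees algebras. One small caution: for ascent of normality along $A \rt T$ the closed fibre alone is not enough in general (one needs all fibres normal), but your directed-union-of-normal-domains argument --- or the observation that every fibre $\kappa(\mathfrak{p})\otimes_k \ov{k}$ is geometrically regular because $k$ is perfect and $\ov{k}/k$ is separable algebraic --- closes that point.
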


\section{ Ellingsrud-Skjelbred spectral sequences and an application}
In this section we describe the Ellingsrud-Skjelbred spectral sequences (we follow the exposition given in \cite[8.6]{Lorenz}). We also give an application which is crucial for us.

\s Let $S$ be a commutative Noetherian ring. Let $Mod(A)$ be the category of left $A$-modules.

(1) Let $G$ be a finite group. Let $S[G]$ be the group ring and let $Mod(S[G])$ be the category of left $S[G]$-modules. Let $(-)^G$ be the functor of $G$-fixed points. Let $H^n(G,-)$ be the $n^{th}$ right derived functor of
$(-)^G$.

(2) Let $\A$ be an ideal in $S$. Let $\Gamma_\A(-)$ be the torsion functor associated to $\A$.  Let $H^n_\A(-)$ be the $n^{th}$ right derived functor of
$\Gamma_\A(-)$. Usually $H^n_\A(-)$ is called the $n^{th}$ local cohomology functor of $A$ \wrt \ $\A$.

(3) If $M \in Mod(S[G])$ then note $\Gamma_\A(M) \in Mod(S[G])$.

\s Ellingsrud-Skjelbred spectral sequences  are constructed as follows: Consider the following sequence of functors
\[
(i) \quad   Mod(S[G]) \xrightarrow{(-)^G} Mod(S) \xrightarrow{\Gamma_\A} Mod(S),
\]
\[
(ii)   \quad   Mod(S[G]) \xrightarrow{\Gamma_\A} Mod(S[G]) \xrightarrow{(-)^G} Mod(S)
\]
We then notice
\begin{enumerate}[\rm (a)]
\item
The above compositions are equal.
\item
It is possible to use Grothendieck spectral sequence of composite of functors to both (i) and (ii) above; see
\cite[8.6.2]{Lorenz}.
\end{enumerate}
Following Ellingsrud-Skjelbred  we let $H^n_\A(G,-)$ denote the $n^{th}$ right derived functor of this composite
functor. So by (i) and (ii) we have two first quadrant spectral sequences for each $S[G]$-module $M$
\[
(\alpha)\colon \quad \quad  E_2^{p,q} = H^p_\A(H^q(G, M)) \Longrightarrow H^{p+g}_\A(G,M), \ \text{and}
\]
\[
(\beta)\colon \quad \quad \mathcal{E}_2^{p,q} = H^p(G, H^q_\A(M)) \Longrightarrow H^{p+g}_\A(G,M).
\]
\begin{remark}\label{grading}
(1) If $S$ is $\mathbb{N}$-graded ring then $S[G]$ is also $\mathbb{N}$-graded (with $\deg \sigma = 0$ for all $\sigma \in G$).

(2)  If $M$ is a finitely generated graded left $S[G]$-module then $H^n(G, M)$ are finitely generated graded $S$-module. This can be easily seen by taking a graded free resolution of $S$ consisting of finitely generated free  graded $S[G]$-modules.

(3) Ellingsrud-Skjelbred spectral sequences have an obvious graded analogue.
\end{remark}

\s\emph{Application:}\label{setup} \\
Setup: Let $(A,\m)$ be a two dimensional \CM \ local ring containing a field $k \cong A/\m$. Let $\ell$ be a finite Galois extension of $k$ with Galois group $G$. Set $B = A\otimes_k \ell$ and let $\n$ be maximal ideal of $B$. Let $G$ act on $B$ (as described in \ref{basic}(9)). Note $B^G = A$; see \ref{basic}(9). Let $I$ be an $\n$-primary ideal of $B$ which is $G$-invariant (i.e., $\sigma(I)= I$  for each $\sigma \in G$). Let $\R(I) = B[It]$ be the Rees algebra of $I$. Then note we have a natural action of $G$ on $\R(I)$. Let $\C= \R(I)^G$. By a result of E.Noether we have $\C$ is a graded finitely generated $A = \C_0$-algebra. Furthermore $\R(I)$ is a finite $\C$-module.
We prove
\begin{theorem}
\label{ES-app}
(with hypotheses as in \ref{setup}.) Let $\M$ be the graded maximal ideal of $\C$. If $\R(I)$ is \CM \ then
\begin{enumerate}[\rm (1)]
\item
$H^0_\M(\C) = H^1_\M(\C) = 0$.
\item
$H^2_\M(\C)$ has finite length and $H^2_\M(\C)_0 = 0$.
\item
For some $r > 0$ there exists an $\m$-primary ideal $J$ in $A$ such that $A[Jt] = \C^{<r>}$ is \CM.
\end{enumerate}
\end{theorem}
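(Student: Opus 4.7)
The plan is to apply both Ellingsrud-Skjelbred spectral sequences with ring $S = \C$, ideal $\A = \M$, group $G$ our Galois group, and module $M = \R(I)$, viewed as a graded $\C[G]$-module (the $\C = \R(I)^G$-action commutes with the $G$-action). Both sequences
\[
E_2^{p,q} = H^p_\M(H^q(G, \R(I))) \quad \text{and} \quad \mathcal{E}_2^{p,q} = H^p(G, H^q_\M(\R(I)))
\]
abut to $H^{p+q}_\M(G, \R(I))$.

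I would start by computing the abutment via sequence $(\beta)$. Because $\R(I)$ is finite over $\C$ with unique graded maximal ideal $\B$ lying over $\M$, we have $\sqrt{\M \R(I)} = \B$ and so $H^q_\M(\R(I)) = H^q_\B(\R(I))$. Since $\R(I)$ is \CM \ of dimension three, only the row $q = 3$ of $(\beta)$ survives and the sequence degenerates; in particular $H^n_\M(G, \R(I)) = 0$ for $n < 3$. Now I read off the required statements from sequence $(\alpha)$. For $p = 0, 1$, the term $E_2^{p, 0} = H^p_\M(\C)$ is untouched by any differential (first-quadrant degree considerations), hence $H^p_\M(\C) = E_\infty^{p, 0}$ embeds in $H^p_\M(G, \R(I)) = 0$, giving (1). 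For $p = 2$ the only nontrivial differential involving $E_r^{2, 0}$ is $d_2 \colon E_2^{0, 1} \to E_2^{2, 0}$, whose cokernel equals $E_\infty^{2, 0} \hookrightarrow H^2_\M(G, \R(I)) = 0$ and whose kernel equals $E_\infty^{0, 1}$, a quotient of $H^1_\M(G, \R(I)) = 0$. Thus $d_2$ is a graded isomorphism
\[
H^2_\M(\C) \;\cong\; \Gamma_\M(H^1(G, \R(I))).
\]

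For (2), the right-hand side is the $\M$-torsion of a finitely generated graded $\C$-module (Remark \ref{grading}(2)), which is automatically of finite length since it is finitely generated and supported only at the maximal ideal $\M$. To prove the degree-zero vanishing it suffices to show $H^1(G, \R(I))_0 = H^1(G, B) = 0$. The normal basis theorem makes $\ell$ a free $k[G]$-module of rank one, so $H^i(G, \ell) = 0$ for all $i \geq 1$; since $B = \ell \otimes_k A$ and $A$ is $k$-flat, cohomology commutes with this tensor product and yields $H^1(G, B) = H^1(G, \ell) \otimes_k A = 0$.

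For (3), I choose $r$ so that two conditions hold at once: (a) $\C^{<r>}$ is generated in degree one over $A$, which is the case for every sufficiently divisible $r$ because $\C$ is a finitely generated graded $A$-algebra; and (b) every integer $rn$ with $n \in \Z$ lies outside the finite set of degrees in which $H^2_\M(\C)$ is nonzero (the degree $0$ already vanishes by (2)). Writing $\C = \bigoplus_{i=0}^{r-1} \C^{[i]}$ as graded $\C^{<r>}$-module with $\C^{[i]} = \bigoplus_n \C_{rn+i}$ and using $\sqrt{\M^{<r>} \C} = \M$, I obtain
\[
H^j_{\M^{<r>}}(\C^{<r>})_n \;=\; H^j_\M(\C)_{rn}
\]
which vanishes for $j = 0, 1, 2$ by (1) and the choice of $r$. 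Hence $\C^{<r>}$ is \CM \ of dimension three; being standard graded, it equals $A[Jt]$ with $J = \C_r = (I^r)^G \subseteq B^G = A$. Finally $\n^s \subseteq I^r$ for some $s$ (as $I$ is $\n$-primary), so $J \supseteq (\n^s)^G \supseteq \m^s$ and $J$ is $\m$-primary.

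I expect the main obstacle to be the degree-zero vanishing in (2); this hinges on recognizing that the Galois structure of $B$, rather than anything intrinsic to the \CM \ Rees ring, is what forces $H^1(G, B) = 0$ via the normal basis theorem.
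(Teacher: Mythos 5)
Your proof is correct, and for parts (1), (2a) and (3) it is essentially the paper's argument: both collapse the sequence $(\beta)$ using that $\R(I)$ is \CM\ of dimension three over $\C$ (so $H^n_\M(G,\R(I))=0$ for $n\le 2$), read off $H^0_\M(\C)=H^1_\M(\C)=0$ and the graded isomorphism $H^2_\M(\C)\cong H^0_\M(H^1(G,\R(I)))$ from $(\alpha)$, and then pass to a sufficiently divisible Veronese. The genuine difference is the degree-zero vanishing in (2). The paper proves only the weaker fact that the $\m\C$-torsion of $H^1(G,\R(I))$ vanishes in degree zero, via a resolution of $\C$ over $\C[G]$, the exact sequences involving the coboundary module $B^1$ and cocycle module $Z^1$, and crucially Lemma \ref{basic}(10) (injectivity of $H^2_\m(A)\to H^2_\m(B)$, which is where the Cohen--Macaulayness of $A$ enters). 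You instead prove the stronger statement $H^1(G,\R(I))_0=H^1(G,B)=0$ outright: by the normal basis theorem $\ell\cong k[G]$ as $k[G]$-modules, so $B\cong A[G]$ is induced and hence $G$-acyclic (additive Hilbert 90 after the flat base change $k\to A$). This is cleaner, bypasses Lemma \ref{basic}(10) entirely, and does not use that $A$ is \CM\ at this step; the two routes agree because the spectral sequence only ever requires $H^0_\M(H^1(G,\R(I)))_0$, which sits inside $H^1(G,\R(I))_0$. Two small points you should make explicit: the identification $H^1(G,\R(I))_0=H^1(G,B)$ rests on the fact that $G$ acts degree-wise, so group cohomology of a graded $\C[G]$-module decomposes as a direct sum over degrees (this is the graded analogue in Remark \ref{grading}); and in (3) one should also note $J=\C_r\subseteq \n\cap A=\m$, so that $J$ is a proper $\m$-primary ideal.
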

\begin{proof}
  We first consider the Ellingsrud-Skjelbred spectral sequence $(\beta)$ with $S = \C$ and $M = \R(I)$.  We note that as $\R(I)$ is a finite $\C$-module we get that $\sqrt{\M \R(I)}$ is the graded maximal ideal of $\R(I)$.
  As $\R(I)$ is \CM \ of dimension $3$ we get that $H^i_\M(\R(I)) = 0$ for $i \leq 2$. Furthermore by Grothendieck Vanishing theorem we get $H^i_\M(\R(I)) = 0$ for $i > 3$.
  Thus $(\beta)$ collapses at the second stage. In particular we have $H^{r}_\M(G, \R(I)) = 0$ for $r = 0, 1, 2$.

  Next we consider the Ellingsrud-Skjelbred spectral sequence $(\alpha)$ with $S = \C$ and $M = \R(I)$.

  (1) We have $E_2^{0,0} = H^0_\M(\C)$. Furthermore it is clear that $E_2^{0,0} = E_\infty^{0,0}$. As $E_\infty^{0,0}$ is a sub-quotient of $H^0_\M(G, \R(I)) = 0$ we get  $H^0_\M(\C) = 0$.

  We also have $E_2^{1,0} = H^1_\M(\C)$. Furthermore it is clear that $E_2^{1,0} = E_\infty^{1,0}$. As $E_\infty^{1,0}$ is a sub-quotient of $H^1_\M(G, \R(I)) = 0$ we get  $H^1_\M(\C) = 0$.

  (2) We have $ E_2^{0,1} = H^0_\M(H^1(G,\R))$ and $E^{2,0}_2 = H^2_\M(\C)$.
  Note we have an exact sequence
  \[
  0 \rt E_3^{0,1} \rt H^0_\M(H^1(G,\R(I))) \rt  H^2_\M(\C) \rt E_3^{2,0} \rt 0.
  \]
  Furthermore it is clear that $E_3^{0,1} = E_\infty^{0,1}$ and $E_3^{2,0} = E_\infty^{2,0} $. Furthermore $E_\infty^{0,1}$ and $E^\infty_{2,0} $ are sub-quotients of $H^1_\M(G, \R(I))$ and $H^2_\M(G, \R(I))$ which are zero. It follows that we have an graded isomorphism
  $$  H^0_\M(H^1(G,\R(I))) \cong  H^2_\M(\C).$$

  2(a) As $H^1(G, \R(I))$ is a finitely generated $\C$-module we get that $H^2_\M(\C)$ has finite length.

 2(b)  We note that we have a graded inclusion
 $$H^0_\M(H^1(G, \R(I))) \subseteq H^0_{\m \C}(H^1(G, \R(I))). $$
 Thus it suffices to prove $H^0_{\m \C}(H^1(G, \R(I)))_0 = 0$.
 Let $\mathbb{F}$ be a graded free resolution of $\C$ by finitely generated graded free $\C [G]$-modules with $\mathbb{F}_0 = \C [G]$. Then $H^1(G, \R(I))$ is the first cohomology module of the complex
 $\mathbb{W} = \Hom_{\C[G]}(\mathbb{F}, \R(I))$. Let $B^1$ and $Z^1$ be the module of first co-boundaries and first co-cycles of $\mathbb{W}$.

 Note  $Z^1$ is a submodule of $\Hom_{\C[G]}(\mathbb{F}_1, \R(I))$ which in turn is a submodule of $\Hom_{\C}(\mathbb{F}, \R(I)) \cong \R(I)^s$ for some $s$.
 In particular we have $H^0_{\m C}(Z^1) = 0$.

 We also have a graded exact sequence $0 \rt \C \rt \R(I) \rt B^1 \rt 0$. Note as $\m$ is an $A$-ideal (and as $\C_0 = A$ and $\R(I)_0 = B$) we get an exact sequence
 \[
 H^1_\m(B) \rt H^1_\m(B^1)_0 \rt H^2_\m(A) \rt H^2_\m(B).
 \]
 Note as $B$ is finite \CM  \ $A$-module of dimension $2$ we get $H^1_\m(B) = 0$. Also by \ref{basic}(10) the map $H^2_\m(A) \rt H^2_\m(B)$ is an inclusion. Thus  $H^1_\m(B^1)_0  = 0$.

 We have an exact sequence $0 \rt B^1 \rt Z^1 \rt H^1(G, \R(I)) \rt 0$. Taking cohomology we get $H^0_{\m \C}(H^1(G, \R(I)))_0 = 0$,  since $H^0_{\m \C}(Z^1) = H^1_{\m \C}(B^1)_0 = 0$.

 (3) We note that $\C_n = I^n\cap A$. As $B$ is a finite $A$-module we get that $\m B$ is $\n$-primary. As $I^n$ is $\n$-primary we get that $I^n$ will contain some power of $\m B$. As $B$ is flat $A$-module we get $\m^s B \cap A = \m^s$ for all $s \geq 1$. Thus $\C_n$ are $\m$-primary ideals of $A$. As $\C$ is Noetherian it follows that some Veronese $\C^{<m>}$ is standard graded. It follows that  $\C^{<m>} = A[\C_m t]$.

Local cohomology commutes with the Veronese functor.  By (2) it follows that $H^i_{\M^{<s>}}C^{<s>} = 0$ for all $s \geq s_0$ and $i = 0, 1, 2$. We take $r = s_0 m$. Then note that $\C^{<r>} = A[\C_r t]$ is \CM. Furthermore as discussed above $\C_r$ is $\m$-primary.
\end{proof}
\section{proof of Theorem \ref{existence-p}}
In this section we give
\begin{proof}[Proof of Theorem \ref{existence-p}] Let $T = A\otimes_k \ov{k}$. Let $\n$ be the maximal ideal of $T$.
 We note that $T$ is an excellent normal domain containing $\ov{k} \cong T/\n$ (see \ref{limit}, \ref{excellent} and \ref{eclair}(2)).
 By \cite[4.1]{OWY-1} there exists a $p_g$ ideal $J$ in $T$. By \ref{limit} we have $T = \bigcup_{E \in \C_k}A^E$. So there exists $F \in \C_k$ which contains a set of minimal generators
 of $J$. We may further assume  (by enlarging) that $F$ is Galois over $k$. Thus there exists ideal $W$ in $A^F$ with $WT = J$. By \ref{eclair}(3)(b) we get that
 $W$ is a $p_g$ ideal in $A^F$. Let $G$ be the Galois group of $F$ over $k$. Then $G$ acts on $A^F$ (via $\sigma(a\otimes f) = a\otimes \sigma(f)$). By \ref{basic}(9) we get  $(A^F)^G = A$. We also note that we have a natural $G$ action on $A^F[t]$ (fixing $t$) and clearly  its invariant ring is $A[t]$.
 Let $\sigma \in G$. It's action on $A^F[t]$ induces an isomorphism of between the Rees algebra's $\R(W)$ and $\R(\sigma(W))$. So $\sigma(W)$ is a $p_g$ ideal
 in $A^F$.
 As product of $p_g$ ideals is $p_g$ we get that $K = \prod_{\sigma \in G}\sigma(W)$ is a $p_g$ ideal in $A^F$. Note $K$ is $G$-invariant. So the $G$ action of $A^F[t]$
 restricts to a $G$-action on $\R(K)$. Set  $\C = \R(K)^G$. We note that $\C_n = K^n \cap A$ is an $\m$-primary integrally closed ideal for all $n \geq 1$.
 By Theorem \ref{ES-app} some Veronese of $\C^{<r>} = A[Jt]$ is \CM. As $J^n$ is integrally closed $\m$-primary for all $n$ we get that $A[Jt]$ is a \CM \ normal domain.
   Thus $J$ is a $p_g$ ideal in $A$.
\end{proof}

\end{document}